\documentclass[11pt,centertags]{amsart}
\usepackage{amsthm}
\usepackage{amssymb}            
\usepackage{mathrsfs}             
\usepackage{hyperref}           
\usepackage{color}

\newif\ifPDF
\ifx\pdfoutput\undefined
    \PDFfalse
\else
    \ifnum\pdfoutput > 0
        \PDFtrue
    \else
        \PDFfalse
    \fi
\fi
\ifPDF
    \usepackage[pdftex]{graphicx}
    \DeclareGraphicsExtensions{.pdf,.png,.jpg}
    \graphicspath{{}}
\else
    \usepackage{graphicx}
    \DeclareGraphicsExtensions{.eps}
    \graphicspath{{}}
\fi


\newtheorem*{main*}{Main Theorem}

\newtheorem{theorem}{Theorem}[section]
\newtheorem*{theorem*}{Theorem}

\newtheorem{lemma}[theorem]{Lemma}

\newtheorem*{question*}{Question}

\newtheorem*{conjecture*}{Conjecture}

\theoremstyle{definition}

\newtheorem*{definition*}{Definition}

\theoremstyle{remark}

\numberwithin{equation}{section}


\usepackage[total={6.5in,9in},top=1.2in, left=0.9in, includefoot]{geometry}


\tolerance=300


%
%
%





\newcommand{\R}{\mathbb{R}}

\newcommand{\mc}{\mathcal}

\newcommand{\mf}{\mathfrak}


\newcommand{\eps}{\varepsilon }




\DeclareMathOperator{\SL}{SL}

\DeclareMathOperator{\SO}{SO}

\DeclareMathOperator{\Sp}{\Sp}

\newcommand{\op}[1]{\operatorname{#1}}
\newcommand{\set}[1]{\left\{#1 \right\}}

\newcommand{\directsum}{\oplus}

\newcommand{\rank}{\operatorname{rank}}

\newcommand{\pa}{\partial }

\providecommand{\to}{\longrightarrow }

\newcommand{\inner}[1]{\left\langle #1 \right\rangle }






\def\[#1\]{\begin{align*}\begin{split} #1 \end{split}\end{align*} }
\def\AlB#1\AlE{\begin{align}\begin{split} #1 \end{split}\end{align} }


\renewcommand{\hat}{\widehat}

\title[Erratum for ``The Degree Theorem in higher rank'']{Erratum for {\em The Degree Theorem in Higher Rank}}
\author[Chris Connell]{Chris Connell$^\dagger$}
\thanks{$\dagger$ Supported in part by the Simons Foundation  and the CNRS}

\author[Benson Farb]{Benson Farb$^\ddagger$}
\thanks{$\ddagger$ Supported in part by the NSF}

\subjclass[2010]{20F67, 43A15}

\thispagestyle{empty}
\begin{document}

\begin{abstract}
The purpose of this erratum is to correct a mistake in the proof of Theorem 4.1 of \cite{CF}.
\end{abstract}

\maketitle

In this note we fix a mistake in Theorem 4.1 of \cite{CF}.  This error was pointed out to us by 
Inkang Kim and Sungwoon Kim, to whom we are extremely grateful.  The error occurs in the final step of the proof of Theorem 4.4 in \cite{CF}: the stated angle inequality should hold not just for a subspace $V$, but for each individual vector  in an orthonormal $k$-frame. 
The problem with the proof in \cite{CF} occurs at the very end of \S 5, at the top of page 52.  Lemma 5.4 in \cite{CF} applies to all of $V_k'$, but one needs to justify that this lemma applies to the subspace $W'$. 

\bigskip
\noindent
{\bf The fix. } Throughout the present paper we use the notation and terminology of  \cite{CF}.   The setup is as follows.  Let $X=G/K$ be a symmetric space of noncompact type with no local $\R, \mathbb{H}^2$ or $\SL_3(\R)/\SO(3)$ factors.  We also assume (cf.\  \S 4.2 of \cite{CF}) that $X$ is irreducible.  We fix a point $x\in X$ and a maximal flat $\mc{F}$ through $x\in X$.  The stabilizer of $x$ in $G$ is (after conjugation) $K$, and $K$ acts by the derivative action on the tangent space $T_xX$, which we identify as a 
subspace $\mathfrak{p}$ of the Lie algebra $\mathfrak{g}=\mathfrak{p}\oplus\mathfrak{k}$ of $G$, endowed with the standard inner product coming from the Killing form $B$.  
We identify $\mc{F}$ and $\mc{F}^\perp$ with their corresponding tangent spaces in $T_xX$.  As in \S 4.4 of \cite{CF}, define the {\em angle} 
between two subspaces $V,W\subseteq T_xX$  
as $$\angle (V,W):=\inf\{d_{\SO(T_xX)}(I,P): P\in \SO(H) \text{ with } 
PV\subset W\text{ or } PW\subset V\} $$

We do not see how to prove Theorem 4.4 of \cite{CF}, called there the ``Eigenvalue Matching Theorem'',  as stated.  We instead prove the following result.  
Call a set of vectors $\set{w_1,\dots,w_k}$ is a {\em $\delta$-orthonormal $k$-frame} if $\inner{w_i,w_j}<\delta$ for all $1\leq i<j \leq k$.
 
\begin{theorem}[Weak eigenvalue matching]
\label{theorem:evalue:matching}
For each symmetric space $X$ as above, there are constants $C_1$ and $C$ so that the following holds.  Given any $\epsilon<1/(\rank(X)+1)^2$,  for any orthonormal $k$-frame $v_1,\dots,v_k$ in
	$T_{x}X$ with $k\leq\rank(X)$, whose span $V$ satisfies $\angle(V,\mc{F})\leq \epsilon$ there is a $C_1\eps$-orthonormal $2 k$-frame given by vectors $v'_1,v''_1, \dots,v'_{k},v''_{k}$, such that for
	$i=1,\dots,k$: 	
	\begin{equation}
	 \label{eq:main1}
	\angle (hv'_{i},\mathcal{F}^\perp) \leq C\angle (hv_i,\mathcal{F})
	\end{equation}
	and
	\begin{equation}
	\label{eq:main2}
	\angle (hv''_{i},\mathcal{F}^\perp) \leq C\angle (hv_i,\mathcal{F})
	\end{equation}
	for every $h\in K$, where $hv$
	is the linear (derivative) action of $K$ on $v\in T_{x}X$.
\end{theorem}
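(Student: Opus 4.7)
The plan is to correct the original proof by constructing transverse vectors one at a time, associated to each individual $v_i$ rather than to the subspace $V$ as a whole. The error in \cite{CF} was that Lemma 5.4 applied to the full constructed space $V'_k$ but not to a smaller subspace $W' \subseteq V'_k$; by working vector-by-vector from the outset, this restriction step is never invoked.

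I would begin by reducing to the case $v_i \in \mathcal{F}$ exactly. Since $\angle(V, \mathcal{F}) \leq \epsilon$, each $v_i$ decomposes as $\bar v_i + u_i$ with $\bar v_i \in \mathcal{F}$ and $|u_i| = O(\epsilon)$, and the $u_i$-contribution to both sides of the angle inequalities is $O(\epsilon)$, absorbable into the constant $C$. Thus one may assume $v_i \in \mathfrak{a}$, the maximal abelian subalgebra corresponding to $\mathcal{F}$. Using the restricted root space decomposition $\mathfrak{p} = \mathfrak{a} \oplus \bigoplus_{\alpha \in \Sigma^+} \mathfrak{p}_\alpha$, with $\mathcal{F}^\perp = \bigoplus_\alpha \mathfrak{p}_\alpha$, I would associate to each $v_i$ two linearly independent transverse unit vectors $v'_i, v''_i \in \mathcal{F}^\perp$, chosen to lie in root spaces (one or two, depending on rank) on which $v_i$ acts nontrivially. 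The doubling is the essential new feature: for a single $v_i$ the natural transverse construction from \cite{CF} produces a space of dimension at least two, and extracting two independent elements provides the flexibility that a single-vector construction lacks, precisely in a way that bypasses the faulty restriction-to-$W'$ step.

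For the angle inequalities (\ref{eq:main1}) and (\ref{eq:main2}), I would apply the vector-level version of Lemma 5.4 of \cite{CF} to each pair $(v_i, v'_i)$ and $(v_i, v''_i)$ separately. The lemma, restricted to a single vector, yields a bound of the form $\angle(hv'_i, \mathfrak{p}_\alpha) \leq C\angle(hv_i, \mathfrak{a})$ for each root $\alpha$ involved, and combining over the relevant root spaces gives the desired bound with respect to $\mathcal{F}^\perp$ and $\mathcal{F}$. The main obstacle I anticipate is the orthonormality check: with $k$ potentially nearby vectors $v_1, \dots, v_k \in \mathfrak{a}$, the $2k$ transverse choices must be coordinated so that the resulting frame is $C_1\epsilon$-orthonormal. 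This is exactly where the hypothesis $\epsilon < 1/(\rank(X) + 1)^2$ is used: it provides enough room to make a generic selection of root vectors avoiding near-degeneracies between $v'_i, v''_i$ and $v'_j, v''_j$ for distinct $i, j$, either by a continuity argument in the choice of Weyl chamber containing the $v_i$, or by a direct combinatorial argument counting the available root spaces against the possible overlaps.
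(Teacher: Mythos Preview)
Your overall architecture matches the paper's: reduce to $v_i\in\mathcal{F}$, attach to each $v_i$ the subspace $Q_i=(\operatorname{span}\{K_i\cdot\mathcal{F}\})^\perp\subset\mathcal{F}^\perp$ (spanned by the $\mathfrak{p}_\alpha$ with $\alpha(v_i)\neq 0$), invoke the single-vector case of the angle lemma (this is Lemma~5.3, not 5.4, of \cite{CF}) to get \eqref{eq:main1}--\eqref{eq:main2} for any $w\in Q_i$, and then select two vectors from each $Q_i$.

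The genuine gap is in your last paragraph. You locate the difficulty in ``near-degeneracies'' and propose to resolve it with the hypothesis $\epsilon<1/(\rank(X)+1)^2$, via continuity or a generic-position argument. That is not where the difficulty lies, and $\epsilon$ plays no role in it. The obstruction is already present, and purely discrete, at $\epsilon=0$: one must choose $2n$ \emph{distinct} members of the fixed orthonormal basis $\{b_j\}$ of $\mathcal{F}^\perp$, two from each $Q_i$. This is a Hall-type matching problem in the bipartite incidence between the $Q_i$ and the $b_j$, and it can fail without further input---indeed it \emph{does} fail for $\SL_3(\R)$, which is exactly why that factor is excluded. The paper resolves it by proving codimension bounds $\dim Q_i=\dim K-\dim K_i\geq n$ (and usually $\geq 2n-2$) via the classification of maximal subgroups of the compact group $K$ (Dynkin, plus the tables in \cite{AFG,Ant,Kna}), together with overlap estimates $\dim(Q_i\cap Q_j)\leq 1$ in the bad case $K\cong\SO(n+1)$, and then runs an explicit greedy algorithm on the resulting $0$--$1$ matrix. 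None of this is a perturbation issue, and a ``generic selection of root vectors'' cannot substitute for it: the root spaces available to each $v_i$ are determined by which walls $v_i$ lies on, and there is no continuous parameter to vary.

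The bound on $\epsilon$ is used elsewhere: in the reduction step (to control the element $k_i'\in K$ carrying $v_i$ into $K_i\cdot\mathcal{F}$, which is what produces the $C_1\epsilon$ in the almost-orthonormality), and later in \S\ref{section:Theorem4.1} for the measure estimate on $\partial_F X$.
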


Fortunately, Theorem~\ref{theorem:evalue:matching} is enough to deduce the main theorem (Theorem 4.1) of \cite{CF}. We make this deduction in \S\ref{section:Theorem4.1} below.

\medskip
\noindent
{\bf Acknowledgements. } We are extremely grateful to Inkang Kim and Sungwoon Kim for finding the mistake in \cite{CF}, and for their careful readings, questions and comments on the present paper. We are also grateful to the referee for making a number of corrections and suggestions that improved the paper.

\section{Proof of Theorem~\ref{theorem:evalue:matching} when $\epsilon=0$}
\label{section:proof:special}

By extending a given orthonormal $k$-frame to an orthonormal basis, it is enough to prove the theorem for the case $k=n$, where $n=\rank(X)$.    So let $\{v_1,\ldots ,v_n\}$ be an orthonormal $n$-frame in $T_xX$, and let $V$ denote its span.  In this section we prove will prove the following:

\medskip
{\it Theorem~\ref{theorem:evalue:matching} holds in the special case when $\epsilon=0$, that is when $V\subset \mc{F}$.  Further, in this special case, the theorem holds with only the assumption that $\set{v_i}$ spans all of $\mc{F}$, not necessarily that $\set{v_i}$ forms an orthonormal frame.}

\medskip

We thus assume throughout this section that $V\subset \mc{F}$.  Let $K_i$ denote the stabilizer of $v_i$.  Recall that the Lie algebra of $K_i$ is $\mf{m}\directsum_j \mf{k}_{\alpha_{i_j}}$, where $\mf{m}$ is the Lie algebra of the stabilizer of $\mc{F}$ and the sum is taken over the family of all one-dimensional spaces $\mf{k}_{\alpha_{i_j}}\subset \mf{k}$ such that $v_i$ belongs to the kernel of the (positive) root $\alpha_{i_j}$. We define for each $1\leq i\leq n$ the subspace
\[Q_i:=(\rm{span}\{K_i\cdot\mc{F}\})^\perp.\]

 For each positive root $\alpha$, we have $[\mf{k}_\alpha,\mf{a}]\subset \mf{p}_\alpha\directsum \mf{a}\subset \mf{g}_\alpha\directsum \mf{g}_{-\alpha}\directsum \mf{g}_0$.  In particular, $Q_i$ is spanned by the set of $\mf{p}_\alpha$ for those positive roots $\alpha\neq 0$ such that $\mf{k}_\alpha\not\in \mf{k}_{v_i}$.

 \begin{lemma}[{\bf Vectors in $Q_i$ satisfy \eqref{eq:main1}}]
 \label{lemma:qigood}
There exists a constant $C>0$, depending only on $\dim(X)$, so that for any $w\in Q_i$ and any $h\in K$: 

 \begin{equation}
  \label{eq:main5}
\angle (w,h\cdot\mathcal{F}^\perp) \leq C\angle (v_i,h\cdot\mathcal{F})
\end{equation}
where $h$ acts via the derivative action of $K$ on $v\in T_{x}X$.   
\end{lemma}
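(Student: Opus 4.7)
The plan is to identify $Q_i$ with the image $\ad(v_i)(\mf{k}) = [\mf{k}, v_i]$, write an arbitrary $w\in Q_i$ as $w = [Y, v_i]$ for some $Y\in\mf{k}$, and then exploit the $\Ad$-equivariance $hw = [\Ad(h)Y,\, hv_i]$ together with the containment $[\mf{k}, \mf{a}]\subset\mc{F}^\perp$ to pass from the hypothesis that $hv_i$ is close to $\mc{F}$ to the conclusion that $hw$ is close to $\mc{F}^\perp$.

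First, from the root-space formula recalled just before the lemma, $\ad(v_i)$ kills $\mf{m}$ and, on each $\mf{k}_\alpha$, equals $\pm\alpha(v_i)$ times the Cartan isometry $\mf{k}_\alpha \to \mf{p}_\alpha$; consequently its image is exactly $\bigoplus_{\alpha(v_i)\neq 0}\mf{p}_\alpha = Q_i$, and one may choose a preimage $Y$ of $w$ satisfying $\|Y\|/\|w\| \leq 1/\min\{|\alpha(v_i)| : \alpha(v_i)\neq 0\}$. Next, using $hw = [\Ad(h)Y, hv_i]$ and decomposing $hv_i = u+v$ with $u\in\mc{F}=\mf{a}$ and $v\in\mc{F}^\perp$, the crucial observation is that $[\Ad(h)Y,\, u]\in[\mf{k},\mf{a}]\subset\bigoplus_{\alpha>0}\mf{p}_\alpha=\mc{F}^\perp$, so this piece contributes nothing to the $\mc{F}$-component of $hw$; hence $\|(hw)_{\mc{F}}\|\leq C_0\|Y\|\,\|v\|$, where $C_0$ bounds the Lie bracket as a bilinear map on $\mf{g}$. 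Converting $\|v\|$ into an upper bound in terms of $\angle(hv_i,\mc{F})$, and $\angle(hw,\mc{F}^\perp)$ into an upper bound in terms of $\|(hw)_{\mc{F}}\|/\|w\|$ (standard estimates relating the $\SO$-distance definition of angle to sine and Euclidean norms), then yields the desired inequality with constant $C\asymp C_0\,\|Y\|/\|w\|$.

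The main obstacle is securing a bound on $\|Y\|/\|w\|$ that depends only on $\dim(X)$. By the explicit inverse constructed above, this reduces to a uniform lower bound on $\min\{|\alpha(v_i)|:\alpha(v_i)\neq 0\}$ over positive roots $\alpha$. Since we are in the regime $V\subset\mc{F}$ and the $v_i$'s are only required to span $\mc{F}$ (and not to form an orthonormal frame), such uniformity can be arranged by choosing the $v_i$ as a fixed spanning set of $\mf{a}$ whose elements are bounded away from the root hyperplanes on which they are not required to vanish; the resulting $C$ then depends only on the root system of $X$, hence on $\dim(X)$.
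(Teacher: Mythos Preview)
Your approach is different from the paper's: the paper simply invokes Lemma~5.3 of \cite{CF} (taking $V=\mathrm{span}\{v_i\}$ and $V'=\mathrm{span}\{w\}$ and running that lemma's proof-by-contradiction verbatim from the line ``If no such constant\ldots''), whereas you give a direct Lie-algebraic computation. The core step---writing $w=[Y,v_i]$, using $\Ad$-equivariance, and observing that $[\Ad(h)Y,u]\in[\mathfrak{k},\mathfrak{a}]\subset\mathcal{F}^\perp$ so that only the $\mathcal{F}^\perp$-part of $hv_i$ contributes to the $\mathcal{F}$-part of $hw$---is correct and is exactly the right mechanism.

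The gap is in your last paragraph. The vectors $v_1,\ldots,v_n$ are the \emph{given} data of Theorem~\ref{theorem:evalue:matching}: the italicized claim opening \S\ref{section:proof:special}, that the $\epsilon=0$ case holds for any spanning set $\{v_i\}$, is a $\forall$-statement over frames, not permission to select a convenient one. So you are not free to ``choose the $v_i$\ldots bounded away from the root hyperplanes.'' For a unit $v_i$ close to (but off) a wall $\ker\alpha$, the relevant root space $\mathfrak{p}_\alpha$ still lies in $Q_i$, and your constant $\|Y\|/\|w\|\asymp 1/|\alpha(v_i)|$ is unbounded; concretely, with $w\in\mathfrak{p}_\alpha$ and $h=\exp(tY)$ for $Y\in\mathfrak{k}_\alpha$, one has $\angle(hw,\mathcal{F}^\perp)\asymp t$ while $\angle(hv_i,\mathcal{F})\asymp t\,|\alpha(v_i)|$. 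Hence your argument as written does not yield a constant depending only on $\dim(X)$. (If you look ahead to how the lemma is actually applied in the general-$\epsilon$ step, the given vectors are first replaced by nearby ``most singular'' vectors $w_i$, for which $|\alpha(w_i)|$ is uniformly bounded below over all $\alpha$ with $\alpha(w_i)\neq 0$; but that replacement is not available to you inside the proof of the present lemma.)
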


\begin{proof} This exact fact was proven in Lemma 5.3 of \cite{CF}: take $V:={\rm span}\{v_i\}$ and $V':={\rm span}\{w\}$ and apply the proof of that lemma verbatim starting with the line ``If no such constant $\ldots$.''  The earlier part of the lemma was meant only to produce such a $V'$. \end{proof}

Let $K_{\mc{F}}<K$ denote the stabilizer of $\mc{F}$, and let $m=\dim K/K_{\mc{F}}=\dim K-\dim K_{\mc{F}}$.  Note that $n+m=\dim(X)$.   For each positive root $\alpha$, let $\mf{p}_\alpha:=(\mf{g}_\alpha\directsum \mf{g}_{-\alpha})\cap\mf{p}$.  Now $[\mf{k}_\alpha,\mf{a}]\subset \mf{p}_\alpha\directsum \mf{a}\subset \mf{g}_\alpha\directsum \mf{g}_{-\alpha}\directsum \mf{g}_0$.   For each $i$ choose $a_i\in\mf{a}$ such that $b_i:=[k_i,a_i]$ spans $\mf{p}_{\alpha_i}:=(\mf{g}_{\alpha_i}\directsum \mf{g}_{-\alpha_i})\cap\mf{p}$.  Replacing $b_i$ by $b_i/||b_i||$, we can assume that each $b_i$ has length $1$.   Note that $b_i\not\in\mf{a}$ since $[\mf{k},\mf{a}] \cap \mf{a}=0$.

For two distinct roots $\alpha,\beta$ with 
$\alpha+\beta\neq 0$, the Killing form satisfies $B(\mf{g}_\alpha,\mf{g}_\beta)=0$.  It follows that $b_i\in\mc{F}^\perp$ and $\{b_i\}$ is orthonormal.  In particular, since this set 
has cardinality $\dim(\mc{F}^\perp)$, it forms an orthonormal basis for $\mc{F}^\perp$.  

Denote the Lie algebra of $K_i$ by  $\mf{k}_i$.  Now 
\[Q_i=({\rm span}(K_i\cdot \mc{F}))^\perp={\rm span}\{b_j: b_j\not\in [\mf{k}_i,\mf{a}]\}.\] 
Since $K_i$ is a proper subgroup of $K$, for each $j$ there exists $i$ so that $b_j\not\in [\mf{k}_i,\mf{a}]$; in particular $b_j\in Q_i$.   Note that this $i$ is not necessarily unique.  Thus, to summarize, the basis $\{b_i\}$ is adapted to the $Q_i$ in the sense that each $b_j$ belongs to some $Q_i$ and each $Q_i$ is spanned by the collection of $b_j$'s that it contains.

\begin{lemma}
\label{lemma:mainlemma}
There exists a subset of $\{b_i\}$ consisting of $2n$ distinct elements, two from each $Q_i, 1\leq i\leq n$.
\end{lemma}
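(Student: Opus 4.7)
The lemma is a bipartite assignment problem, and I would attack it via Hall's marriage theorem. Form the bipartite graph $\mathcal{G}$ with left vertex set $\{Q_1,\dots,Q_n\}$ and right vertex set $\{b_1,\dots,b_m\}$, and declare $Q_i\sim b_j$ iff $b_j\in Q_i$. By the discussion preceding the lemma this incidence is equivalent to $\alpha_j(v_i)\ne 0$, where $\alpha_j$ is the positive restricted root with $b_j\in\mf{p}_{\alpha_j}$. Producing $2n$ distinct $b_j$'s, two per $Q_i$, is the same as exhibiting an assignment of two distinct right-neighbors to each $Q_i$ with all $2n$ choices pairwise distinct. Equivalently, by doubling each left vertex and applying the ordinary Hall/K\"onig theorem, such an assignment exists iff
\[
 \abs{N_{\mathcal{G}}(S)} \ge 2\abs{S}\qquad\text{for every }S\subseteq\{Q_1,\dots,Q_n\}.
\]

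To check this Hall condition, fix $S=\{Q_{i_1},\dots,Q_{i_s}\}$ and let $U:=\myspan\{v_{i_1},\dots,v_{i_s}\}\subseteq\mc{F}$. A right vertex $b_j$ fails to lie in $N_{\mathcal{G}}(S)$ precisely when $\alpha_j|_U\equiv 0$. Letting $\Phi$ denote the restricted root system of $X$ and identifying restricted roots with vectors in $\mc{F}$ via the Killing form, the collection of such positive roots is the positive part $\Phi_U^+$ of the sub-root system $\Phi_U:=\Phi\cap U^\perp$, which has rank at most $n-\dim U\le n-s$. Writing $m_\alpha := \dim\mf{p}_\alpha$ and $m=\sum_{\alpha\in\Phi^+}m_\alpha$, the Hall inequality becomes the numerical bound
\[
 \sum_{\alpha\in\Phi_U^+} m_\alpha \;\le\; m - 2s,
\]
which is a purely root-theoretic statement: for every sub-root system of $\Phi$ of rank at most $n-s$, the multiplicity-weighted count of its positive roots is at most $m-2s$.

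The plan for establishing this inequality is a case analysis over the admissible irreducible restricted Dynkin types, that is, all irreducible types of rank $\ge 2$ other than $A_2$, these being exactly those permitted by the hypotheses on $X$, using the standard description of sub-root systems (Borel--de~Siebenthal) together with the tabulated restricted-root multiplicities of the irreducible symmetric spaces of noncompact type. The trivial endpoints $s=0,n$ and the case $s=n-1$ (which reduces to the fact that distinct restricted roots are never collinear) are immediate; the intermediate ranks are the bulk of the bookkeeping. The main obstacle is precisely this case-by-case verification. In particular the $s=n$ case is the bound $m\ge 2n$, which fails only for $\mathrm{SL}_3(\R)/\mathrm{SO}(3)$ (where $m=3<4$), explaining the exclusion of this factor from the hypotheses.
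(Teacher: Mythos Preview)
Your reduction via Hall's marriage theorem is correct and is the natural framework for this assignment problem: doubling the left vertices turns the desired ``two from each $Q_i$'' into an ordinary system of distinct representatives, and your translation of the Hall defect condition into the root-theoretic inequality
\[
\sum_{\alpha\in\Phi_U^+} m_\alpha \;\le\; m-2s
\]
for every $s$-dimensional $U\subseteq\mc{F}$ is accurate. One small slip: in non-reduced restricted root systems (type $BC_n$) the roots $\alpha$ and $2\alpha$ are collinear, so your remark that the $s=n-1$ case is immediate ``because distinct restricted roots are never collinear'' is not quite right; one still has to bound $m_\alpha+m_{2\alpha}$ against $m-2n+2$, which is easy but not vacuous. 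Apart from that, the plan is sound, and the promised case check over irreducible types with their tabulated multiplicities is the honest workload.

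The paper takes a genuinely different route. Rather than invoking Hall, it records structural properties of the $n\times m$ incidence matrix $A$ --- each row has at least $n$ ones; equal rows have at least $2n-1$ ones; two rows with fewer than $2n-2$ ones each share at most one column --- proved via codimension bounds for vector-stabilizers $K_i$ inside $K$ (a case analysis over the maximal compacts). It then runs a greedy row-by-row selection with an ad hoc backtracking step at the final stage. Your approach is conceptually cleaner, since Hall delivers existence in one stroke once the defect inequality is verified; the paper's approach is more constructive and trades enumerating sub-root systems for bounding subgroup codimensions. Both ultimately rest on case-by-case Lie-theoretic input of comparable depth; neither avoids the classification.
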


We prove Lemma~\ref{lemma:mainlemma}  below.  Assuming this for now, let $v_i,v_i'$ denote 
the pair of vectors in $Q_i$ guaranteed by the lemma.  We claim that $\{v_1',v_1'',\ldots ,v_n',v_n''\}$ satisfy the conclusion of Theorem~\ref{theorem:evalue:matching}, thus proving that theorem in the special case $V\subset \mc{F}$, which we have assumed throughout this section.  To see this, first note that, by definition, the set $\{b_i\}$ is orthonormal, and its span is also orthogonal to $V$.  Since $v_i',v_i''\in Q_i$,  the inequality \eqref{eq:main5} of Lemma~\ref{lemma:qigood} gives exactly inequalities \eqref{eq:main1} and \eqref{eq:main2} of the theorem, as desired.  The rest of this paper 
is devoted to proving Lemma~\ref{lemma:mainlemma}.

\subsection{Combinatorial Translation}

To prove Lemma~\ref{lemma:mainlemma} we first translate it into a problem that is purely 
combinatorial.  To this end, let $A=A_G=(a_{ij})$ be the $n\times m$ matrix  whose $i,j$ entry is $1$ if $b_j$ belongs to $Q_i$ and $0$ otherwise.  Lemma~\ref{lemma:mainlemma} is then the statement that we can pick two $1$ entries from each row of $A$, so that all of our $2n$ choices are in different columns. More formally:

\bigskip
\noindent
{\bf Key Claim (Lemma~\ref{lemma:mainlemma} restated): } {\it For each $1\leq i\leq n$ there exists $1\leq j_i,k_i\leq m$ with $j_i\neq k_i$ so that each $a_{ij_i}=a_{ik_i}=1$ and 
$\bigcup_{i=1}^n \set{j_i,k_i}$ has cardinality $2n$.}

\smallskip
Given the Key Claim, we set $v_i':=b_{j_i}$ and $v_i'':=b_{k_i}$,  proving (as explained above) Theorem~\ref{theorem:evalue:matching}. The rest of this paper is devoted to proving the Key Claim.

Note that the Key Claim is true for $A$ if and only if it is true for any matrix obtained from $A$ by permuting its rows or columns, as these operations correspond to just re-ordering the $Q_i$'s and $b_j$'s, respectively.  We think of the $n\times m$ matrix of of being a list $u_1,\ldots ,u_n$ of $n$ row vectors, each in $\{0,1\}^n$.    Before proving properties of the matrix $A$, we will need the following lemma from Lie theory.

\begin{lemma}[{\bf Codimension of proper Lie subgroups of $K$}]
\label{lemma:numbers}
Let $G\neq \SL(3,\R)$ be a connected, simply-connected, simple Lie group with 
$n:=\rank_\R(G)\geq 2$.  Let $K$ denote the maximal compact subgroup of $G$.  
Let $H<K$ be the stabilizer of a vector in $\mc{F}$, and let $d(H):=\dim K -\dim H$. Then:
\begin{enumerate}
\item If $K=\SO(n+1)$ then either $d(H)\geq 2n-2$ or $d(H)=n$ and $H$ is locally isomorphic to $\SO(n)$.
\item If $K=\SO(n)\times \SO(n+r)$ with $r\geq 0$ then $d(H)\geq 2n-2+r$. 
\item For all other $K$ we have $d(H)\geq 2n-1$.
\end{enumerate}
\end{lemma}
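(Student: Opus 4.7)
The plan is to reformulate $d(H) = \dim K - \dim H$ as a root-theoretic minimization and then verify the claimed bounds case by case. Using the restricted root space decomposition of $\mathfrak{g}$ with respect to $\mathfrak{a} = \mathcal{F}$, the Lie algebra of $H = K_v$ equals $\mathfrak{m} \oplus \bigoplus_{\alpha > 0,\ \alpha(v) = 0} \mathfrak{k}_\alpha$, so
\[ d(H) \;=\; \sum_{\alpha > 0 \,:\, \alpha(v) \neq 0} m_\alpha, \]
where $m_\alpha = \dim \mathfrak{k}_\alpha$ is the restricted-root multiplicity. By Weyl-group symmetry we may assume $v$ lies in the closed positive Weyl chamber, and since $d(H)$ decreases as $v$ moves onto additional walls, its minimum over nonzero $v$ is attained on a $1$-dimensional face of the chamber; equivalently, $v$ is orthogonal to every simple root except one, say $\alpha_{i_0}$. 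For such $v$, one has $\alpha(v) = 0$ iff the coefficient $c_{i_0}(\alpha)$ of $\alpha_{i_0}$ in the simple-root expansion of $\alpha$ vanishes, so
\[ \mu(G) \;:=\; \min_{v \neq 0} d(H) \;=\; \min_{1 \le i_0 \le n}\ \sum_{\alpha > 0 \,:\, c_{i_0}(\alpha) > 0} m_\alpha. \]

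The bulk of the proof is then a case analysis by restricted root system, with multiplicities read off the Satake diagram. For type $A_n$ the inner sum equals $i_0(n+1-i_0)m$, minimized at $i_0 \in \{1,n\}$ to give $nm$; for $G = \SL_{n+1}(\R)$ ($m=1$) this yields $\mu = n$ with $H$ locally isomorphic to $\SO(n)$, matching case~(1), while for $\SL_{n+1}(\C)$, $\SL_{n+1}(\HH)$, and $\operatorname{SU}^{*}(2n+2)$ (where $m \in \{2,4\}$) we get $\mu \ge 2n$. For type $B_n$ (with short-root multiplicity $r \ge 1$) and $D_n$ (the case $r = 0$), taking $i_0$ at the short-root end of the Dynkin diagram gives $\mu = 2(n-1) + r$, matching case~(2). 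For the remaining classical restricted types $C_n$ and $BC_n$ (covering $\Sp_n(\R)$, $\Sp(p,q)$, $\operatorname{SU}(p,q)$, $\SO^{*}(2n)$, and the complex forms) and for the exceptional non-compact real forms of $G_2, F_4, E_6, E_7, E_8$, an analogous end-node computation confirms $\mu \ge 2n - 1$. The sharper dichotomy in case~(1)—that any non-minimal stabilizer in $\SL_{n+1}(\R)$ satisfies $d(H) \ge 2n - 2$—then follows from the identity $d(H) = \binom{n+1}{2} - \sum_k \binom{n_k}{2}$ for diagonal $v$ with eigenvalue partition $(n_1,\dots,n_s)$ of $n+1$: the partition $(n,1)$ gives $d(H) = n$, while every other proper partition gives $d(H) \ge 2n - 2$, with equality attained at $(n-1,1,1)$ and $(n-1,2)$.

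The main obstacle is organizational rather than conceptual: tracking restricted-root multiplicities and carrying out the end-node computation across the complete list of non-compact simple real forms of rank $\ge 2$. Since this list is finite and the rank is bounded by $8$ (for $E_8$), the verification is a manageable bookkeeping exercise, and in every case the resulting $\mu(G)$ visibly meets or exceeds the required bound.
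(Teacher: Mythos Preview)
Your approach is correct and genuinely different from the paper's. The paper proceeds by external classification: it consults published tables (Knapp, Antoneli--Forger--Gaviria, Antoneli, Humphreys) listing the maximal compact $K$ for each simple $G$ and the maximal connected subgroups of each such $K$, then reads off codimensions case by case. You instead work intrinsically via the identity $d(H_v)=\sum_{\alpha>0:\,\alpha(v)\neq 0} m_\alpha$ and the observation that this is minimized when $v$ lies on a one-dimensional face of the closed Weyl chamber, reducing the lemma to a restricted-root computation with multiplicities. Your route is more self-contained and explains structurally why the $\SO(n,n+r)$ case is special; the paper has to argue separately that the codimension-$(n-1)$ subgroup $\SO(n-1)\times\SO(n+r)$ of $K$ never occurs as the stabilizer of a vector in $\mathcal F$, whereas in your setup this issue never arises because you only ever produce stabilizers. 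The trade-off is that the paper's method is a pure table lookup once the references are in hand, while yours requires carrying out the end-node sum for each restricted root system and each real form.

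A few small slips to clean up: ``orthogonal to every simple root except $\alpha_{i_0}$'' should read ``annihilated by every simple root except $\alpha_{i_0}$'' (i.e.\ $v$ is proportional to the fundamental coweight $\omega_{i_0}^\vee$, which is not the same thing); in the $B_n/D_n$ computation the minimizing node is $i_0=1$ (the end away from the short root), giving the $2(n-1)+r$ you state, not the short-root end; and the partition $(n-1,1,1)$ gives $d(H)=\binom{n+1}{2}-\binom{n-1}{2}=2n-1$, not $2n-2$ (only $(n-1,2)$ attains $2n-2$). None of these affects the conclusion.
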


Lemma~\ref{lemma:numbers} should not be a surprise since the dimension of a rank $n$ compact Lie group $K$ grows quadratically in $n$, and typically the rank of a proper Lie subgroup of $K$ has rank $<n$, and $(n+1)^2-n^2=2n+2$.

\begin{proof}
The list of maximal compact subgroups $K$ of all possible real and complex, connected, simply-connected, simple  Lie groups, including exceptional groups, is given on pages 684--718 of \cite{Kna}. Since we are bounding codimension from below, we can assume that $H$ is a subgroup of a maximal proper subgroup of $K$. 

For each of the $K$ coming from the classical algebras, the list of possible connected Lie subgroups is given in Tables 5--8 found on pages 1018--1027 of \cite{AFG}. (For a simpler list that is sufficient in our case, the maximal Lie subalgebras are found in Tables 1--4 on pages 987--1010 of that same paper.) The list of the exceptional cases can be found in Table 1.7 on page 37 of \cite{Ant}, where one interprets the list respectively as either the complex or real compact form, ignoring the noncompact real split cases. 

The dimension of each of these groups is the sum of the dimensions of its simple factors.  The dimensions for these can be computed for example, from Table 1 on page 66 of \cite{Hum}. (Note that for the complex groups, to compute the real dimension one must multiply the number of positive roots by two and add the rank.) The possible dimensions of maximal compact lie subgroups follows by going through each case and plugging in the numbers from these tables.

Now there is one general case where the codimensions of maximal proper subgroups do not agree with the codimension bounds listed in the statement, namely there are maximal subgroups of $K=SO(n)\times SO(n+r)$ which are locally isomorphic to $SO(n-1)\times SO(n+r)$ which only have codimension $n-1$ in $K$. (This case arises for $G=SO(n,n+r)$.) However, any maximal stabilizer of a vector in $\mc{F}$ must be locally isomorphic to a subgroup of $SO(n-1)\times SO(n+r-1)$ since neither factor group stabilizes any vector of $\mc{F}$ and maximal proper subgroups of $SO(k)$ for any $k>1$ are locally isomorphic to subgroups of $SO(k-1)$. Hence the codimension of $H<K$ is $2n-2+r$ in this case.

\end{proof}

We remark that the data we need from all these tables is really originally due to Dynkin \cite{Dyn,Dyn2} who computed these in the algebraically closed case. However, it is a tedious exercise to extract all of the real reductive, split and compact form cases that arise. This has been done for us in the more modern references cited above.

With Lemma~\ref{lemma:numbers} in hand, we are now ready to prove properties of the matrix $A$.  
Recall that we have reduced the situation to the case where $X$ is irreducible. Thus we only care about simple Lie group $G$ not locally isomorphic to $\SL_3(\mathbb{R})$.  Let $|u_i|$ denote the number of $1$ entries of $u_i$.

\begin{lemma}[{\bf Properties of \boldmath$A$}]
\label{lemma:overlap}
Let $G\neq \SL_3(\R)$ be a connected, simply-connected, simple Lie group with $n:={\rm rank}_{\R}(G)\geq 2$.  Let $K$ denote the maximal compact subgroup of $G$.  Let $A=A_G$ be defined as above, with row vectors $u_1,\ldots ,u_n$.  Then the following hold.
\begin{enumerate}
\item Each column of $A$ has at least one entry equal to $1$.
\item $|u_i|\geq n$ for each $i$.
\item If $K$ is not locally isomorphic to $\SO(n+1)$, then $|u_i|\geq 2n-2$ for each $i$. 

\item If $u_i=u_j$ then $|u_i|=|u_j|\geq 2n-1$.
\item For $i\neq j$, if $|u_i|<2n-2$ and $|u_j|<2n-2$, then there is at most one $k$ with $a_{ik}=a_{jk}=1$. 
\end{enumerate}
\end{lemma}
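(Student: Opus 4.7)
The plan is to translate each claim about $A$ into a statement about positive roots and their vanishing on $\{v_i\}$, then combine Lemma~\ref{lemma:numbers} for the coarse bounds with explicit root-system bookkeeping for (4) and (5). The key dictionary comes from the description $Q_i = \bigoplus_{\alpha:\,\alpha(v_i)\neq 0}\mf{p}_\alpha$ appearing just before Lemma~\ref{lemma:qigood}: $a_{ij} = 1$ iff the positive root $\alpha_j$ associated to $b_j$ does not vanish on $v_i$, and consequently $|u_i| = \dim Q_i = \dim K - \dim K_i = d(K_i)$ in the notation of Lemma~\ref{lemma:numbers}.

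Claims (1)--(3) then fall out immediately. For (1), each $\alpha_j$ is a nonzero linear functional on $\mc{F} = \operatorname{span}\{v_i\}$, so some $v_i$ lies outside $\ker\alpha_j$. For (2) and (3), feed the bounds of Lemma~\ref{lemma:numbers} in: all three alternatives give $d(K_i) \geq n$ (using $n \geq 2$), and if $K$ is not locally $\SO(n+1)$ we are in case (2) or (3), where $d(K_i) \geq 2n-2+r \geq 2n-2$ or $\geq 2n-1$.

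For (4), the hypothesis $|u_i| < 2n-1$ combined with (3) and Lemma~\ref{lemma:numbers} restricts us to either (a) $G = \SL_{n+1}(\R)$ with $|u_i| \in \{n, 2n-2\}$, or (b) $G = \SO(n,n)$ (case (2) with $r=0$) with $|u_i| = 2n-2$. The approach is to show in each case that the vanishing root set of $v_i$ determines $v_i$ up to scalar multiple; then $u_i = u_j$ forces $v_j \parallel v_i$, contradicting the linear independence of the spanning frame $\{v_1,\dots,v_n\}$. In case (a), realizing $\mc{F}$ as traceless diagonal matrices with positive roots $e_a - e_b$, the vanishing root set encodes the partition of $\{1,\dots,n+1\}$ by equal-coordinate blocks; a short calculation identifies $|u_i| = n$ with partition $(n, 1)$ and $|u_i| = 2n-2$ with partition $(n-1, 2)$, and in both cases the partition together with tracelessness pins $v_i$ to a line. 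In case (b), a brief enumeration of closed sub-root systems of $D_n$ with exactly $(n-1)(n-2)$ positive roots yields only $D_{n-1}$ (realized by $v_i = (0,\dots, c, \dots, 0)$) and, only for $n = 4$, $A_{n-1}$ (realized by $v_i = (c, \dots, c)$); each such vanishing set again determines $v_i$ up to scalar. Claim (5) then proceeds in the same spirit: $|u_i|, |u_j| < 2n-2$ forces $G = \SL_{n+1}(\R)$ and each $v_i$ of partition type $(n, 1)$ with a distinguished position $a_i$; linear independence rules out $a_i = a_j$, and a column $k$ common to both rows corresponds to a positive root $e_a - e_b$ with $\{a,b\} \supseteq \{a_i, a_j\}$, forcing $\{a,b\} = \{a_i, a_j\}$ and hence giving exactly one such column.

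The main obstacle is the case analysis in (4), particularly verifying in $\SO(n,n)$ that the $D_{n-1}$ (and, for $n=4$, $A_{n-1}$) sub-root systems exhaust the possibilities of the required size; this rests on an enumeration of closed sub-root systems of $D_n$. One must also keep in mind the low-rank accidental isomorphisms ($\SO(3,3) \cong \SL_4(\R)$ and $\SO(2,2)$ not simple) when invoking Lemma~\ref{lemma:numbers}.
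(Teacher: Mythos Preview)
Your argument for (1)--(3) and (5) is correct and matches the paper's in spirit; in fact your proof of (1) is a cleaner rephrasing of the paper's contradiction argument, and your treatment of (5) via the explicit $A_n$ root computation is a legitimate alternative to the paper's approach, which instead argues that $\dim(Q_i+Q_j)$ equals the codimension of $K_i\cap K_j$ in $K$ and then uses that two distinct copies of $\mf{so}(n)$ inside $\mf{so}(n+1)$ meet in a subalgebra of $\mf{so}(n-1)$.

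For (4) the two proofs genuinely diverge. The paper never touches explicit root systems: from $u_i=u_j$ it deduces $K_i=K_j$, so $v_i,v_j$ lie in a common singular subspace $W\subset\mc F$ of dimension $\geq 2$; inside $W$ one finds a strictly more singular line, whose stabilizer $K'$ satisfies $K_i\subsetneq K'\subsetneq K$, and then a second application of Lemma~\ref{lemma:numbers} to $K'$ forces $\dim K-\dim K_i\geq 2n-1$. Your approach instead classifies, case by case, the vectors $v\in\mc F$ whose non-vanishing root count is below $2n-1$ and checks that the vanishing set pins $v$ to a line. This works, but the ``brief enumeration'' in the $D_n$ case is the delicate step: you must show that the only vectors with exactly $(n-1)(n-2)$ vanishing positive roots are those with a single nonzero coordinate (for $n\geq 5$) together with the extra $A_3$-type families when $n=4$. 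For $n=4$ there are in fact three $A_3$ sub-root systems related by triality (realized by $(c,c,c,c)$, $(c,c,c,-c)$, and $(c,c,-c,-c)$ up to permutation), not just the one you list; each still determines $v$ up to scalar, so your conclusion survives, but the enumeration as written is incomplete. The paper's stabilizer-chain argument buys uniformity and avoids this case analysis entirely, at the cost of invoking Lemma~\ref{lemma:numbers} twice; your root-system approach is more hands-on and arguably more transparent in the $A_n$ case, but requires more care in $D_n$.
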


\begin{proof}
 If (1) does not hold, then there is some $b\in \{b_i\}$ that does not lie in $Q_i$ for any $i$.  We can write $b=[k,a]$ where $k$ (resp.\ $a$) is a positive root vector in $\mathfrak{k}$ (resp.\  $\mathfrak{a}$).  Let $\mathfrak{k}_i$ be the Lie algebra of $K_i$.  

Since $b\in \{b_i\}$ but 
$b\not\in Q_i=\{{\rm span}K_i\cdot \mc{F}\}^\perp$, the fact that $\{b_i\}$ is an orthonormal basis for $\mc{F}^\perp$ implies that 
$b\in {\rm span}\{K_i\cdot\mc{F}\}$, so we can write 
$b\in[\mathfrak{k}_i,\mathfrak{a}]$.   
Since this is true for each $i$, it follows that 
$k\in \cap_i\mathfrak{k}_i$. However, since the entire frame $\set{v_i}$ forms a basis for $\mc{F}$ and $\exp k$ stabilizes each vector simultaneously, $k$ belongs to the stabilizer of $\mf{a}$ in $\mf{k}$. In other words, $k\in \mf{k}\cap \mathfrak{g}_0$, contradicting the fact that $k$ belongs to a positive root space. Thus it must be that each $b_j$ lies in some $Q_i$, proving (1).

To prove (2), we first note that $X=K\cdot \mc{F}$.  Thus  
\[\dim Q_i=\dim {\rm span}(\{K_i\cdot\mc{F}\})^\perp=\dim(X)-\dim{\rm span}\{K_i\cdot\mc{F}\}\]
Since $K_{\mc{F}}$ is an extension of the pointwise stabilizer $K'_\mc{F}$ of $\mc{F}$ 
by a finite group (the Weyl group of $G$), and so $\dim K_{\mc{F}}=\dim K'_{\mc{F}}$, we also have
\[\dim X=\dim {\rm span}\{K\cdot \mc{F}\}=\dim K+\dim\mc{F}-\dim K_{\mc{F}}\]
and
\[\dim {\rm span}\{K_i\cdot \mc{F}\}=\dim K_i+\dim\mc{F}-\dim K_\mc{F}\]
since $K_\mc{F}$ is contained in $K_i$. Combining the above three equations gives
\begin{equation}
\label{eq:dimqi}
\dim Q_i=\dim K - \dim K_i
\end{equation}

Items (2) and (3) now follow by applying Lemma~\ref{lemma:numbers}.

We now prove (4).  If $u_i=u_j$ then $K_i=K_j$ and therefore $v_i$ and $v_j$ belong to the same singular subspace $W\subseteq \mc{F}$, and neither lies in a more singular subspace. Hence $\dim(W)\geq 2$, and so $W$ contains a $1$-dimensional subspace that is $K'$-invariant for some proper subgroup $K'$ of $K$ properly containing $K_i$.  Lemma~\ref{lemma:numbers}  implies that if $K'$ does not already have codimension at least $2n-1$, then either $K$ is isomorphic to $SO(n+1)$ and $K'$ is necessarily locally isomorphic to $SO(n)$ or else $K=SO(n)\times SO(n)$ and $K'$ is locally isomorphic to $SO(n-1)\times SO(n-1)$. In the second case any proper Lie subgroup of $K'$ already has codimension $2n-1$ in $K$. In the first case, $K_i$ can have dimension no larger than that of $SO(n-1)$, corresponding to a proper Lie subgroup of $K'$. Therefore $K_i$ has codimension at least $n-1$ in $K'$ and codimension at least $2n-1$ in $K$, as indicated. This proves (4).

For (5), we first note that, by Proposition 2.20.5  of \cite{Ebe}, both  $K_i$ and $K_j$ are proper semisimple subgroups of $K$.   From Lemma~\ref{lemma:numbers}, we note that the only case where there can exist $u_r$ with $|u_r|< 2n-2$ is when $K$ is locally isomorphic to $SO(n+1)$ and $K_r$ is locally isomorphic to $SO(n)$.   We assume this is the case, and hence $K_i$ and some $K_j$ are locally isomorphic to $SO(n)$.
   

From the discussion above, it remains to show that the dimension of $Q_i\cap Q_j$, or the intersection of $\mf{k}\ominus\mf{k}_{i}$ and $\mf{k}\ominus\mf{k}_j$, is at most one. We note that the basis of $Q_i$ consists of all of those $b_r$'s whose corresponding $k_r\in \mf{k}$ is not in $\mf{k}_i$; similarly, $Q_j$ consists of those $b_r$ for which $k_r\not\in\mf{k}_j$.  Hence $Q_i+Q_j$ is spanned by the set of those $b_r$ with $b_r\not\in\mf{k}_i\cap \mf{k}_j$. This corresponds exactly to $((K_i\cap K_j)\cdot \mc{F})^\perp$. 

The dimension of $Q_i+Q_j$ is therefore the codimension in $K$ of $K_i\cap K_j$.  The subgroups $K_i$ and $K_j$ are distinct by (4). The intersection of two distinct copies of $\mf{so}(n)$ in $\mf{so}(n+1)$ is isomorphic to a subalgebra of $\mf{so}(n-1)$. Hence $\dim (Q_i+Q_j) \geq 2n-1$ and hence $\dim Q_i\cap Q_j=\dim Q_i+\dim Q_j-\dim (Q_i+Q_j)\leq 2n-(2n-1)=1$, completing the proof.

\end{proof}

\subsection{Solving the combinatorial problem}

By re-ordering and relabeling the rows, we can and will assume $|u_i|\leq |u_{i+1}|$ for all $i$.  Lemma~\ref{lemma:overlap}(2), Lemma~\ref{lemma:overlap}(3) and Lemma~\ref{lemma:numbers}(1) together imply that for each $i$ either $|u_i|=n$ or else $|u_i|\geq 2n-2$.

We will now describe an algorithm which takes input a subset of row vectors $\{u_i\}$, and at each stage removes one of the vectors, and changes each vector by removing two of its entries. We still call the remaining vectors by the same names $u_i$.   First consider the case that there exists $p>0$ so that $|u_i|=n$ for each $1\leq  i\leq p$.   Set $t=1$.  For any $t$ let $N(i,t)$ denote the number of $1$'s left in the vector $u_i$ at the start of Stage $t$ of the algorithm.  So for example $N(i,1)=n$ for all $1\leq i\leq p$.  Now, starting with $t=1$,  perform ``Stage $t$'' of the following algorithm on the row vectors $\{u_1,\ldots ,u_p\}$:  

\begin{enumerate}
\item[Step 1: ] Re-order the rows so that $N(i,t)\leq N(i+1,t)$ for each $1\leq i\leq p-t$.

\item[Step 2: ] Choose two $1$ entries of the top row; and let $j_t,k_t$ be the column numbers of these two entries. 

\item[Step 3: ] Delete the top row and the columns $j_t$ and $k_t$, still calling the remaining vectors $u_j$ by their original name.  Now increase the counter $t$ by $1$, and go to Step 1.
\end{enumerate}

   At each stage we remove two columns corresponding to the columns of two $1$ entries of the top row. By Lemma~\ref{lemma:overlap}(5), this implies that at most one $1$ is removed from any of the other rows.  We thus have that  if the vector $u_i$ remains at Stage $t$ then 
\begin{equation}
\label{eq:algo1}
N(i,t)\geq N(i,t-1)-1
\end{equation}

The algorithm can only fail at Step $2$. Let $d$ be the smallest $t$ for which Stage $t$ of the algorithm fails.  $A$ has at most $n$ rows, so $d\leq n$.  Let $u_j$ denote the top row after performing Step $1$ at Stage $t=d$.  The assumption of failure is then $N(j,d)\leq 1$.  Lemma~\ref{lemma:overlap}(5)
implies that at each stage $t=1,\ldots ,d-1$, at most one $1$ was removed from $u_j$.   But $N(j,1)\geq n$, so that 
\[1\geq N(j,d)\geq n-(d-1)=n-d+1\]
and so $d\geq n$, so that $d=n$ and $N(j,n)=1$.  In other words, the algorithm will succeed 
in choosing two $1$'s from each row $u_i$ with $|u_i|=n$, except possibly if  $|u_i|=n$ for each $1\leq i\leq n$, in which case the algorithm can only possibly fail at Stage $n$, at the final row vector $u_j$.  

If $N(j,n)=2$ we are done, so assume $N(j,n)\leq 1$.  Application of \eqref{eq:algo1} gives $N(j,n-1)\leq 2$.  By our ordering in Step 1, the top row $u_k$ at Stage $t=n-1$ has $N(k,n-1)\leq 2$.  A repeated application of \eqref{eq:algo1} gives that $N(i,2)\leq n-1$ for all $2\leq i\leq p$.  This means that each $u_2,\ldots ,u_p$ must have a $1$ in one of the two columns removed from $u_1$ during Step 2 of Stage 1.  Since $N(1,1)=n>2$, there exists an entry of $u_1$ that does not overlap with any other $u_j$. Instead of choosing the entry of $u_1$ that overlaps with $u_j$, choose this entry to remove. Then we can choose the original entry from $u_j$, so that we do not fail at the last stage.

We have thus shown that the above algorithm always succeeds: we can choose two $1$'s from each $u_i$ with $|u_i|=n$, all satisfying the Key Claim.    Since we are done if every row is of this form, we can now assume that $|u_n|\geq 2n-2$.   Note that it may be that $|u_i|=2n-2$ for each $1\leq i\leq n$.

Having performed the algorithm successfully on the (possibly empty) $\{u_i: |u_i|=n\}$, we now continue with the algorithm on the remaining vectors $\{u_i: |u_i|\geq 2n-2\}$, not resetting $t=1$.   Since at most two columns are removed at any stage of the algorithm, the only way for the algorithm to fail with some vector $u_j$ with $|u_j|=2n-2$ or $|u_j|=2n-1$ in the top row is at Stage $t=n$.  If this happens then of the $2n-2$ columns removed in the first $n-1$, stages, at least $2n-3$ of them must have been in columns in which $u_j$ has a $1$.  

Lemma~\ref{lemma:overlap}(1) states that each column of $A$ has a $1$.  The number of columns of $A$ is the dimension of the symmetric space $X$ minus the rank. From Table II on page 354 of \cite{Hel}, we see that for any given rank $n\geq 2$ the $\dim(X)-n$ is at least $n(n+1)/2$, equality occurring only for the case when $G=\SL(n+1,\R)$. Hence the total number of columns of $A$ is always at least $n(n+1)/2$.  Thus, after having removed at most $2n-2$ columns, there must be at least $n(n+1)/2-(2n-2)= (n^2-3n+4)/2\geq 2$ (for $n\geq 3$ - this is where we are using the hypothesis $G\not\approx\SL_3(\R)$) columns not yet removed, which have an entry with $1$.  Call two of these columns $c_1,c_2$.  These $1$ entries are entries in row vectors $u_p,u_q$ for some $p,q\neq j$, with $p=q$ possible.  At some stage $u_p$ was the top row, and two columns were removed corresponding to two $1$ entries of $u_p$.   Put back one of these columns and remove $c_1$ instead.  Do the same thing with $u_p$ replaced by $u_q$ and $c_1$ replaced by $c_2$.    

We claim that there is not a failure at stage $n$, with row vector $u_j$. If $N(j,n)=0$ then precisely two $1$'s from $u_j$ were removed at each stage $1,\ldots ,n-1$, so that the two columns we just replaced now each give a $1$ back to $u_j$, so that the algorithm doesn't fail at $u_j$.  If $N(j,n)=1$ then it is still the case that of the $2n-2$ columns removed, at most one such column of $u_j$ did not have a $1$ entry.  In particular $u_j$ had a $1$ removed from one of the columns $c_1$ or $c_2$. Since we replaced this column, and since $N(j,n)=1$, the replacement gives two $1$ entries for $u_j$, and again the algorithm does not fail at stage $n$.

We have thus shown that the modified algorithm given above terminates with the choices proving the Key Claim.

\section{Finishing the proof of Theorem~\ref{theorem:evalue:matching}}

In this section we complete the proof of prove Theorem~\ref{theorem:evalue:matching}.  

Let $k_i\in K$ be a closest element to the identity such that ${\hat w_i}:=k_i^{-1} v_i$ lies in $\mc{F}$. If it happens that there is a more singular vector in $\mc{F}$ very nearby to $\hat{w_i}$, then it may be that $k_i$ could be large (say on the order of $\pi$) as it moves $\hat{w_i}$ through a large rotation around the singular vector, but keeping it very close to $\mc{F}$. Hence we begin by replacing $\hat{w_i}$ with the most singular vector $w_i$ in the ball of radius $\eps_o=1/(\rank(X)+1)^2$ about $\hat{w_i}$ and that is closest to $\hat{w_i}$. (This vector will be unique as the singular subspaces form linear flags.) By the choice of $\eps_o$, the new $w_i$ will be the most singular in its $\eps_o$-ball. 


Set $K_i={\rm Stab}_K(w_i)$ and note that $K_i$ will contain the stabilizer of 
$\hat{w_i}$. Since each element $k$ of a stabilizer subgroup not belonging to $K_i$ 
stabilizes a vector at least $3\eps_o$ away from $\hat{w_i}$, it follows that $k$ moves 
$\hat{w_i}$ at least a distance of $\frac{\displaystyle \eps_o}{\displaystyle 4\pi}d_K(k,1)$ away from $\hat{w_i}$, 
provided $d_K(k,1)<\frac{\pi}{4}$. 

We will show that there is a small element of $K$ that moves $v_i$ into $K_i\cdot 
\mc{F}$, as follows.  Since the derivative at $0$ of the exponential map $\exp:\mf{g}\to 
G$ is the identity map, we can transport metric estimates to $\mf{g}$. Therefore, 
setting $\hat{a}_i$ to be the lift to $\mf{a}$ of $\hat{w_i}$, there is a $c_o$ 
depending only on $\eps_o$ such that each element $u\in \mf{k}$ orthogonal to $\mf{k}_i$ 
and with $|u|<1$ has $|[u,\hat{a}_i]|\geq c_o|u|$. In particular, the 
$\frac{\eps}{c_o}$-neighborhood $U$ of $0$ in $\mf{k}$ has the property that 
$[U+\mf{k}_i,\mf{a}]+\mf{a}$ contains the $\eps$-neighborhood of $\hat{a_i}$ in 
$\mf{p}$. Consequently, descending back to $X$, there is a constant $c_1$ depending only on 
$\eps_o$ (or equivalently $\rank(X)$) such that smallest element $k_i'=\exp(u)\in K$ 
such that $v_i\in k_i' K_i\cdot \mc{F}$ has $d_{K}(k_i',1)<c_1\eps$. 

We also have $\angle(w_i,v_i)<\eps_0+\eps<2\eps_0$.  Since $\{v_i\}$ is orthonormal, it 
follows that $\{w_i\}$ is $4\epsilon_0$-orthonormal, and in particular it is still a 
frame.

Since $\{w_i\}\subset\mc{F}$, we can apply the special case $\epsilon=0$ of Theorem \ref{theorem:evalue:matching} proved in \S\ref{section:proof:special}. (Recall that for this case, we did not require the $\set{w_i}$ to be orthonormal.) This produces 
an orthonormal (since $\epsilon=0$) $2k$-frame $\set{w_i',w_i''}$ satisfying the angle 
inequalities of Theorem~\ref{theorem:evalue:matching} with $v_i,v_i',v_i''$ replaced by 
$w_i,w_i',w_i''$.  (Observe that $w_i'$ and $w_i''$ also satisfy the angle inequalities 
with $w_i$ replaced by $\hat{w_i}$ as well since $w_i'$ and $w_i''$ are orthogonal to all 
of $K_i\cdot\mc{F}$ and $K_i$ contains the stabilizer of $\hat{w}_i$.)

  Moreover, as proved in the $\epsilon=0$ case of Theorem \ref{theorem:evalue:matching}, 
  $w_i',w_i''\in (K_i\mc{F})^\perp$ for each $i$.  Now let $v_i'=k_i'w_i'$, let  
  $v_i''=k_i'w_i''$ and let $z_i=v_i'-w_i'$.    Since $d_K(k_i',1)<c_1\eps$ it follows 
  that $|z_i|<c_1\eps$ and 
  \[\begin{array}{ll}
  |<v_i',v_j'>|&=|<w_i'+z_i,w_j'+z_j>|\\
  &=|0+<w_i',z_j>+<z_i,w_j'>+<z_i,z_j>|\\
  &\leq 3c_1\eps
  \end{array}\] for all $1\leq i,j\leq k$. The same bound holds for $<v_i',v_j''>$ and 
  $<v_i'',v_j''>$ by the same computation.  Now set $C_1:=3c_1$. (Note that $v_i'$ and 
  $v_i''$ are also orthogonal to $v_i$ since $k_i'(K_i\mc{F})^\perp=(k_i' 
  K_i\mc{F})^\perp$.) 
  
  Finally $\angle (hv'_{i},\mathcal{F}^\perp)=\angle (hk_i'w'_{i},\mathcal{F}^\perp) 
  \leq C\angle (hk_i' \hat{w}_i,\mathcal{F})=C\angle (hv_i,\mathcal{F})$, and similarly 
  for $v_i''$. This completes the proof of 
  Theorem~\ref{theorem:evalue:matching}.

\section{Proving Theorem 4.1 of \cite{CF}}
\label{section:Theorem4.1}

In this section we prove the main theorem (Theorem 4.1) of \cite{CF}.  The proof as given in \S 4.5 of \cite{CF} needs to be slightly modified, given that we do not know Theorem 4.4 of \cite{CF} as stated, but only the slightly weakened form, Theorem~\ref{theorem:evalue:matching} above.

On page 41 of \cite{CF} we choose $\epsilon=1/(\rank(X)+1)$.  We now instead choose 
$\epsilon$ so small that $\epsilon<1/(\rank(X)+1)^2$ and so that for any $t$, when $\sin(t)<\epsilon$ then $\sin(t)>t/2$. This new choice of constant of course still depends only on $\rank(X)$, and the only affect of this change will be to change the resulting constants in the proof of the theorem.  
As in \cite{CF}, we let $L_1,\ldots ,L_k$ be the $k\leq \rank(X)$ eigenvalues of the positive semi-definite quadratic form $Q_2$ that are strictly less than $\epsilon$.   
As stated in \cite{CF}, if no such eigenvalues exist then we are done, so we assume $k\geq 1$.  Label the $L_i$ so that $0\leq L_1\leq\cdots \leq L_k$.  Denote by $v_i$ the eigenvector associated to $L_i$. 

Plugging the formula $r(v)=\sin^2\angle(v,\mc{F})$, given on page 42 of \cite{CF}, into the formula for $L_i$ given on the last line of page 41 of \cite{CF}, gives

\[
L_i=\int_{\pa_F X}\sin^2\angle(kv_i,\mc{F})d\sigma_y^s(k).
\]

Recall that we are identifying $\pa_FX$ with $K/M$, whose elements we write as elements of $K$, remembering that they are really equivalence classes.  For each $i$ let 
\[A_i:=\{k\in\pa_FX: \sin^2(\angle kv_i,\mc{F})\leq \sqrt{L_i}\}\] 
and let $B_i:=\pa_FX-A_i$.  

We claim that for each fixed $i$, each $k\in A_i$ moves $v_i$ a small angle from $\mc{F}$.  To see this, note that for any $k\in A_i$: 
\[\sqrt{L_i}\geq \sin^2\angle(kv_i,\mc{F})>(\angle(kv_i,\mc{F})/2)^2\]
so that $\angle(kv_i,\mc{F})\leq 2(L_i)^{1/4}$, as desired. Here we have used our choice of $\epsilon$ to obtain the second inequality.

Now
\[L_i=\int_{\pa_F X}\sin^2\angle(kv_i,\mc{F})d\sigma_y^s(k) \geq \int_{B_i}\sin^2\angle(kv_i,\mc{F})d\sigma_y^s(k)\geq \sqrt{L_i}\cdot\sigma_y^s(B)\]
so that  $\sigma_y^s(B_i)\leq \sqrt{L_i}\leq \sqrt{\epsilon}$  for each $i$.  Since we have chosen $\epsilon<1/(\rank(X)+1)^2$, we obtain
\[\sigma_y^s(B_1\cup\cdots\cup B_k)\leq k\sqrt{\epsilon}\leq \rank(X)\sqrt{\epsilon}<\frac{\rank(X)}{\rank(X)+1}<1.\]
Since by definition $A_i$ is the complement $B^c_i$ and $\sigma_y^s(\pa_F(X))=1$, it follows that $A:=A_1\cap\cdots \cap A_k\neq\emptyset$.  Any element in $A$ moves all of $V=\op{span}\set{v_1,\dots,v_k}$ to within $2\eps^{1/4}$ of $\mc{F}$.

We have just proved that there exists an element $k_0\in A$ with the property that $\angle(k_0v_i,\mc{F})\leq 2\epsilon^{1/4}$ for each $i$.  Now apply Theorem \ref{theorem:evalue:matching} to $\{k_0v_i\}$, and note that the vectors $\set{v_i',v_i''}$ produced satisfy the same inequalities with $k_0v_i$ replaced by $v_i$.   We now apply these inequalities in the string of inequalities starting on line 2 of Page 43 of \cite{CF} , with only one modification, namely, the first line should now read:

\[
\det Q_1\leq C' \prod_{i=1}^k \inner{v_i'Q_1,v_i'}\inner{{v_i''Q_1,v_i''}}
\]
Where $C'=\frac{1}{(1-C_1\eps)^{4k}}$ and $C_1$ is from Theorem \ref{theorem:evalue:matching}. This uniform constant will also be carried along in the rest of the inequalities and then absorbed into the final constant $C$.

\end{document}